\newtheorem{FactCounter}{dummy}[section] 
\newtheorem{theorem}[FactCounter]{Theorem}
\newtheorem{corollary}[FactCounter]{Corollary}
\newtheorem{lemma}[FactCounter]{Lemma}
\newtheorem{proposition}[FactCounter]{Proposition}
\theoremstyle{definition}
\newtheorem{definition}[FactCounter]{Definition}
\theoremstyle{definition}
\theoremstyle{remark}
\newtheorem{remark}[FactCounter]{Remark}
\theoremstyle{remark}
\theoremstyle{remark}
\theoremstyle{remark}
\theoremstyle{definition}
\newtheorem{example}[FactCounter]{Example}
\theoremstyle{definition}
\theoremstyle{plain}
\newtheorem{fact}[FactCounter]{Fact}
\newcommand{\onto}{\twoheadrightarrow}
\newcommand{\lra}{\longrightarrow}%
\newcommand{\ra}{\rightarrow}%
\newcommand{\Ra}{\Rightarrow}%
\DeclareRobustCommand\lonto
\DeclareRobustCommand\linto
\let\phi\varphi
\colorlet{red}{BrickRed}
\colorlet{green}{OliveGreen}
\colorlet{magenta}{Thistle}
\def\@tocline#1#2#3#4#5#6#7{\relax
	\ifnum #1>\c@tocdepth 
	\else
	\par \addpenalty\@secpenalty\addvspace{#2}%
	\begingroup \hyphenpenalty\@M
	\@ifempty{#4}{%
		\@tempdima\csname r@tocindent\number#1\endcsname\relax
	}{%
		\@tempdima#4\relax
	}%
	\parindent\z@ \leftskip#3\relax \advance\leftskip\@tempdima\relax
	\rightskip\@pnumwidth plus4em \parfillskip-\@pnumwidth
	#5\leavevmode\hskip-\@tempdima
	\ifcase #1
	\or\or \hskip 1em \or \hskip 2em \else \hskip 3em \fi%
	#6\nobreak\relax
	\dotfill\hbox to\@pnumwidth{\@tocpagenum{#7}}\par
	\nobreak
	\endgroup
	\fi}
\begin{document}

\date{\today}
\subjclass[2020]{06F20 (Primary), 03C64, 12J15}
\keywords{cross-section, totally ordered abelian group, Dedekind complete, tame pair, real closed valued field}
\title{\texorpdfstring{Cross-Sections of Divisible Abelian $ \MakeLowercase{O} $-Groups via Tame Pairs}{Sections of Divisible Abelian  o-Groups via Tame Pairs}}
\author{Ricardo  Palomino Piepenborn}
\email{\href{mailto:ricardo.palomino@rjpp.net}{ricardo.palomino@rjpp.net}}
\urladdr{\href{http://www.rjpp.net}{rjpp.net}}

\begin{abstract}
	It is shown that images of cross-sections of surjective morphisms $ f : \Gamma \lonto \Delta $ of divisible abelian $ o $-groups are exactly divisible, tame (equivalently, relative Dedekind complete) and cofinal subgroups of $ \Gamma  $ compatible with $ f $ in a suitable sense. The note concludes with an application  to real closed valued fields.
\end{abstract}

\maketitle

\section{Introduction}

Recall that if $ f: A \lra B $ and $ g: B \lra A $ are morphisms in a given category $ \mathcal{C} $ such that $ f \circ g = \text{id}_B $, then $ g $ is a \textit{cross-section of $ f $} and $ f $ is a \textit{retract of $ g $}. Cross-sections are monomorphisms and retracts are epimorphisms. In particular, if $ \mathcal{C} $ is either the category of abelian groups or the category of commutative unital rings, and $ f: A \lonto B $ is a surjective morphism with a cross-section $ g : B \linto A $, then  $ g(B) \subseteq A$ is a substructure of $ A $ which is isomorphic to $ B $. Conversely, if $ C \subseteq A$ is any substructure such that the restriction $ f_{\upharpoonright C} : C \lra B $ is an isomorphism, then $ (f_{\upharpoonright C})^{-1}: B  \linto A $ is a cross-section of $ f $. In this way, a cross-section of $ f $ allows to access $ B $  \enquote {internally} within $ A $. This use of cross-sections is deployed in the model theory of valued fields, where the three-sorted valued field structure  is expanded by a cross-section for the valuation and a cross-section for the residue field map when they exist, see for example \cite{vdD.model_theory_val_field}.

Let now $ K $ be a real closed field and $ V \subseteq K $ be a convex subring, that is,  $ (K, V) $ is a \textit{real closed valued field}. It is well-known that  if a subfield $ F \subseteq V $ is maximal in $ V $ with respect to subset inclusion, then the restriction of the residue field map $ \lambda: V \lonto V/\mathfrak{m}_V $ to $ F $ is an isomorphism (\cite[Proposition 2.5.3]{knebusch/scheiderer.real_algebra}), therefore maximal subfields of $ V $ are images of cross-sections of $ \lambda $, and the converse also holds. The work of Lewenberg and van den Dries in \cite{vdD/Lewenberg.T-convexity_and_tame_extensions} shows that there is another characterization of images of cross-sections of $ \lambda $: these are exactly real closed subfields of $ V $ which are \textit{tame} (or \textit{Dedekind complete}) \textit{in $ K $}, see Definition \ref{def.tame}.  Conversely, if $ L \subseteq K $ is a real closed subfield which is tame in  $ K $, then the convex hull of $ L  $ in $ K $ is a convex subring of  $ K $ such that the restriction of its residue field map to $ L $ is an isomorphism. In summary,  the order-theoretic notion of tameness  characterizes cross-sections of the residue field map of a real closed valued field.

This  note shows that tameness also characterizes cross-sections of surjective morphisms of divisible $ o $-groups (divisible totally ordered abelian groups), see Theorem \ref{the.equiv_sect_DOAGS}. The proof is very elementary. Some parts of the proof are simplified using a result on tame pairs of o-minimal structures (see the end of Section \ref{sec.tame}), but a purely order-theoretical proof is also given without any reference to o-minimality. The discussion after Theorem \ref{the.equiv_sect_DOAGS} recalls the known fact that these cross-sections always exist. Theorem \ref{the.equiv_sect_DOAGS} is applied in Section \ref{sec.rcvf} to the valuation map of a real closed valued field. This yields a one-sorted axiomatization of real closed valued fields by regarding them as pairs $ (K; G) $, where $ K $ is a real closed field and $ G \subseteq K^{>0} $ is a divisible multiplicative subgroup tame in $ K^{>0} $ and which satisfies some first-order conditions.

 Throughout this note all groups are abelian, and an $ o $\textit{-group} is a totally ordered (abelian) group. All groups are written additively with the exception of Section \ref{sec.rcvf}.

\section{Preliminaries on tame pairs}\label{sec.tame}
In this section $ A=(A, < ,\ldots) $ and $ B=(B, <, \ldots) $ are expansions of dense linear orders without endpoints in the same first-order language $ \mathscr{L} $ and  $ A \subseteq B $ as $ \mathscr{L} $-structures. For example, $ \mathscr{L} $ could be the language $ \mathscr{L}^{\text{og}}:= \{ +, - , 0 , < \}  $ of $ o $-groups and $ A \subseteq B $ be an extension of divisible $ o  $-groups (recall that the underlying order of a divisible $ o $-group is a dense linear order without endpoints). 

\begin{definition}[1.12 in \cite{vdD/Lewenberg.T-convexity_and_tame_extensions}, or \cite{pillay.definability_of_types}]\label{def.tame}
 $ A $ \textit{is tame in $ B $} (or $ A $ \textit{is Dedekind complete in $ B $}) if for every $ A $-bounded $ b \in B $ (that is, for every $ b \in \text{c.h.}_B(A) := \{b' \in B \mid \exists a_1, a_2 \in A \text{ such that } a_1 \leq b'\leq  a_2\} $) there exists $ a \in A $  such that one of the following items holds true:
	\begin{enumerate}[\normalfont(i)]
	\item $b =a  $,	or
	\item $ b <a $ and there is no $ a' \in A $ such that $ b < a' < a $, or
	\item  $ a < b $ and there is no  $ a'  \in A$  such that $ a < a' < b$.
	\end{enumerate}
\end{definition}

Other well-known equivalent characterizations of tameness are collected in the next statement. The proof is straightforward from the definitions. 

\begin{fact}
	The following are equivalent:
		\begin{enumerate}[\normalfont(i)]
		\item $ A  $ is tame in $ B $.
		 \item For every $ b \in B $, the set  $ \{ a \in A \mid a < b \}  $ has a supremum in  $ A \cup \{ \pm \infty \}  $.
		\end{enumerate}
		Suppose further that $ A $ and $ B $ are  expansions of divisible $ o $-groups.  Then \emph{(i)} and  \emph{(ii)} are equivalent to:
		\begin{enumerate}
			\item[\normalfont(iii)] 	For all $ b \in \emph{c.h.}_B(A) $ there exists $ a \in A $ such that  $ |b-a|< a' $ for all  $ a' \in A^{>0}  $.
		\end{enumerate}
\end{fact}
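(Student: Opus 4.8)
The plan is to establish (i) $\Leftrightarrow$ (ii) and (i) $\Leftrightarrow$ (iii) separately, in each direction reducing to the concrete trichotomy of Definition \ref{def.tame}. The first equivalence needs only that the orders are dense without endpoints; the second additionally uses the group operation, and divisibility enters solely through the observation that $A^{>0}$ has no least element, so that ``$|b-a|<a'$ for all $a'\in A^{>0}$'' says precisely that $b-a$ is $A$-infinitesimal.

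For (i) $\Rightarrow$ (ii) I would fix $b\in B$, set $S_b:=\{a\in A\mid a<b\}$, and split into cases: if $b$ is below all of $A$ then $S_b=\emptyset$ and $\sup S_b=-\infty$; if $b$ is above all of $A$ then $S_b=A$ and $\sup S_b=+\infty$ (both because $A$ has no endpoints); if $b\in A$ then density forces $\sup S_b=b$; and if $b\in\text{c.h.}_B(A)\setminus A$ then the trichotomy supplies $a\in A$ which is the immediate predecessor or successor of $b$, whence $S_b$ has a least upper bound lying in $A$. For the converse I would set $s:=\sup S_b$ for $A$-bounded $b$; boundedness excludes $s=\pm\infty$ once $b\notin A$ (the case $b\in A$ being trivial), so $s\in A$, and a short comparison shows that $s<b$ forces no element of $A$ into $(s,b)$ (else $s$ would not bound $S_b$), while $s>b$ forces no element of $A$ into $(b,s)$ (else such an element, lying below the least upper bound $s$, would be exceeded by some member of $S_b$, which is absurd since that member is $<b$). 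These are exactly the predecessor and successor clauses of Definition \ref{def.tame}.

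For (i) $\Rightarrow$ (iii) I would take the $a$ given by the trichotomy: if $b=a$ the claim is immediate, and otherwise $b\notin A$, so for every $a'\in A^{>0}$ the element $a-a'$ (if $b<a$) or $a+a'$ (if $a<b$) lies in $A$ and, by the ``nothing strictly in between'' clause, cannot fall in the open interval with endpoints $a$ and $b$; this pins $|b-a|$ strictly below $a'$. Conversely, for (iii) $\Rightarrow$ (i), given $a$ with $b-a$ infinitesimal I would note that if $b\neq a$ then any $a'\in A$ strictly between $a$ and $b$ would make $|a-a'|$ a positive element of $A$ bounded above by the infinitesimal $|b-a|$, which is impossible; hence no such $a'$ exists and $a$ is the immediate successor or predecessor of $b$ according as $b<a$ or $a<b$.

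The only genuine bookkeeping is in (i) $\Leftrightarrow$ (ii): one must handle the ``gap'' configuration in which $b\notin A$ has both an immediate predecessor and an immediate successor in $A$, so that $\sup S_b$ may sit strictly below $b$ even though a nearby element of $A$ lies above it, and one must repeatedly use $b\notin A$ to pass from non-strict to strict inequalities. None of this is deep, which is why the result follows straightforwardly from the definitions.
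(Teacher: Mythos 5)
Your proof is correct, and it is essentially the paper's own approach: the paper gives no argument at all for this Fact, stating only that it is straightforward from the definitions, and your case analysis is exactly the direct verification intended. In particular, you correctly identify the two genuinely delicate points --- the gap configuration in (i) $\Rightarrow$ (ii), where $b \notin A$ may have both an immediate predecessor and an immediate successor in $A$ so that $\sup\{a \in A \mid a < b\}$ is the predecessor rather than the nearby successor, and the repeated use of $b \notin A$ (equivalently, density of $A$) to upgrade non-strict inequalities to strict ones in (i) $\Rightarrow$ (iii).
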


\begin{remark}\label{rem.tame}
	 If $ A  $ is tame in $ B $, then it follows from the fact that $ (A, <)$ is a dense linear order that for every  $ A $-bounded $ b \in B $ (that is, for every $ b \in \text{c.h.}_B(A) $) there exists a unique  $ a \in A $ such that exactly one of the items  (i) - (iii) in Definition \ref{def.tame} holds for $ a $ and $ b $.
\end{remark}

\begin{definition}\label{def.st_part_map}
 The \textit{standard part map associated with the tame pair $ A \subseteq B $} is the map $ \text{st}^B_A: \text{c.h.}_B(A) \lonto A$ given by setting  $ \text{st}^B_A(b)  $  ($ b \in \text{c.h.}_B(A) $) to be the unique element in  $ A $ for which one of the items  (i) - (iii) in Definition \ref{def.tame} hold for $ \text{st}^B_A(b) $ and $ b $. If $ A $ and $ B $ are clear from the context, then write $ \text{st}:= \text{st}^B_A $.	
\end{definition}

\begin{remark}\label{rem.st_part_map}
If $ A $ is tame in $ B $, then:
\begin{enumerate}[\normalfont(i), ref=\ref{rem.st_part_map} (\roman*)]
			\item\label{rem.st_part_map.i} $ \text{st}(a) =a $ for all $ a \in A \subseteq \text{c.h.}_B(A) $.
			\item\label{rem.st_part_map.ii} If $ b \in \text{c.h.}_B(A) $  and $  b <  \text{st}(b) $, then it follows from the fact that $ (A, < ) $ is a dense linear order  that for all $ a \in A $ such that  $ a < b $ there exists $  a' \in A $ such that  $ a < a' < b $. 
\end{enumerate}
\end{remark}

\begin{lemma}\label{lem.st_ord_pres}
Suppose that $ A $ is tame in $ B $. 	The standard part map $ \emph{st}: \emph{c.h.}_B(A) \lonto A $ is order-preserving. 
\end{lemma}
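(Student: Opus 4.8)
The plan is to reduce the statement to a single comparison property of $\text{st}(b)$ against elements of $A$, and then to conclude by a short argument by contradiction that exploits the density of $(A,<)$.

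First I would establish the following auxiliary claim: for every $a \in A$ and every $b \in \text{c.h.}_B(A)$, if $a < \text{st}(b)$ then $a < b$, and dually if $\text{st}(b) < a$ then $b < a$. To prove this I would run through the three mutually exclusive cases of Definition \ref{def.tame} applied to the pair $(\text{st}(b), b)$. In case (i) one has $\text{st}(b) = b$ and both implications are immediate. The only point requiring care is that in cases (ii) and (iii) one must know that $b \notin A$: indeed, if $b$ were in $A$ then $\text{st}(b) = b$ by Remark \ref{rem.st_part_map.i}, so only case (i) could occur. Granting $b \notin A$, consider case (iii), where $\text{st}(b) < b$ and no element of $A$ lies strictly between them. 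Any $a \in A$ with $a < \text{st}(b)$ then satisfies $a < \text{st}(b) < b$; and any $a \in A$ with $\text{st}(b) < a$ can neither lie in the empty interval $(\text{st}(b), b) \cap A$ nor equal $b$, hence $b < a$. Case (ii) is entirely symmetric.

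With the claim in hand, suppose toward a contradiction that $b_1, b_2 \in \text{c.h.}_B(A)$ satisfy $b_1 \leq b_2$ yet $\text{st}(b_2) < \text{st}(b_1)$. Since $(A,<)$ is a dense linear order and both standard parts lie in $A$, I can choose $c \in A$ with $\text{st}(b_2) < c < \text{st}(b_1)$. Applying the claim to $b_1$ with the inequality $c < \text{st}(b_1)$ gives $c < b_1$, and applying it to $b_2$ with $\text{st}(b_2) < c$ gives $b_2 < c$. Chaining these yields $b_2 < c < b_1$, contradicting $b_1 \leq b_2$. Therefore $\text{st}(b_1) \leq \text{st}(b_2)$, which is exactly order-preservation.

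The argument is essentially routine; the one genuinely delicate step is the verification that $b \notin A$ in cases (ii) and (iii), since without it the boundary comparisons between $a$, $b$, and $\text{st}(b)$ could fail to be strict and the auxiliary claim would break down. Everything else is bookkeeping with the trichotomy of Definition \ref{def.tame} together with a single use of density to separate the two standard parts.
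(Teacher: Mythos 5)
Your proof is correct, and it is organized differently from the paper's. The paper argues directly by contradiction: assuming $b_1 < b_2$ and $\text{st}(b_2) < \text{st}(b_1)$, it splits into cases according to whether $\text{st}(b_1) \leq b_1$ or $b_1 < \text{st}(b_1)$, and in the second case into further subcases according to the position of $b_2$ relative to $\text{st}(b_1)$, with density of $(A,<)$ entering through Remark \ref{rem.st_part_map.ii}. You instead factor out a separation claim --- every $a \in A$ with $a < \text{st}(b)$ satisfies $a < b$, and every $a \in A$ with $\text{st}(b) < a$ satisfies $b < a$; in other words, $b$ and $\text{st}(b)$ determine the same cut in $A$ --- and then make a single, uniform use of density to interpolate $c \in A$ with $\text{st}(b_2) < c < \text{st}(b_1)$, which immediately gives $b_2 < c < b_1$ and the desired contradiction. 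What your route buys is the elimination of all case distinctions on the relative positions of the $b_i$ and their standard parts, plus a reusable lemma that cleanly expresses what the standard part map does; what the paper's route buys is avoiding the statement of an auxiliary claim, at the cost of the bookkeeping in its Cases 1--2 and Subcases 1.1--1.2. Your care over the point that $b \notin A$ in cases (ii) and (iii) of Definition \ref{def.tame} (needed to make the boundary inequalities strict) is exactly the right delicate step, and it plays the same role as the paper's preliminary observation that $\text{st}(b_1) \neq b_2$ and $\text{st}(b_2) \neq b_1$.
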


\begin{proof}
	Assume for contradiction that $ b_1, b_2 \in \text{c.h.}_B(A) $ are such that $ b_1 < b_2 $ and  $ \text{st}(b_2) < \text{st}(b_1) $. Note that $ \text{st}(b_1) \neq b_2 $ and $ \text{st}(b_2)  \neq b_1$, as otherwise $ \text{st}(b_1) = \text{st}(b_2) $ by Remark \ref{rem.st_part_map.i}.

\noindent \underline{Case 1:}   $ \text{st}(b_1) \leq b_1 $. In this case $ \text{st}(b_2) < \text{st}(b_1) < b_2 $, therefore $ \text{st}(b_1)\in A$ contradicts tameness. 

\noindent \underline{Case 2:}  $  b_1 < \text{st}(b_1)$.  Tameness together with $ b_1 < \text{st}(b_1) $ and $ \text{st}(b_2)< \text{st}(b_1) $ imply  $ \text{st}(b_2) <   b_1 < \text{st}(b_1) $, and  $ b_1 < b_2 $ then implies that either $ \text{st}(b_2) <  b_1 < b_2 < \text{st}(b_1) $ or  $  \text{st}(b_2) <   b_1 <  \text{st}(b_1) <  b_2 $. In the latter case $  \text{st}(b_2) <    \text{st}(b_1) <  b_2 $ and $ \text{st}(b_1) \in A $ contradict tameness. In the former case, Remark \ref{rem.st_part_map.ii} implies that there exists $ a \in A $ such that  $ \text{st}(b_2) < a < b_1 $,  therefore    $ \text{st}(b_2) < a < b_2 $  and $ a \in A $ contradict tameness.
\end{proof}

If $ (A, <, \dots) \preceq (B, < , \dots) $ is an elementary extension of  o-minimal structures (\cite{tame}) and $ D\subseteq A^n $ is an $ A $-definable subset of $ A $, then write $ D_B $ for the definable subset in $ B^n $ given by the same formula defining  $ D $ in $ A^n $. Moreover, say that $ \overline{b}= (b_1, \dots, b_{n}) \in B^n $ is \textit{$ A $-bounded}  if $ b_i $ is $ A $-bounded for all $ i \in \{1,\dots, n\} $, and if $ A  $ is tame in $ B $ and $ \overline{b} \in B^n$ is $ A $-bounded, write $ \text{st}(\overline{b}) $ for $ (\text{st}(b_1), \dots, \text{st}(b_{n}))  $. 

\begin{lemma}\label{lem.st_cont_def_comm}	
	Let $ (A, < , \dots) $ be $ o $-minimal and tame in an elementary extension $ (B, < , \dots) $. Let $ f: D \lra A $ be a continuous $ A $-definable function on an $ A $-definable set $ D\subseteq A^n $, and let $ \overline{b} \in D_B $ be  $ A $-bounded with $ \emph{st}(\overline{b})\in D $. Then $ f_B(\overline{b}) $ is $ A $-bounded and $  	\emph{st}(f_B(\overline{b})) = f(\emph{st}(\overline{b})) $.	
\end{lemma}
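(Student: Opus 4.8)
The plan is to pass continuity of $f$ to $B$ by elementarity and combine it with the fact that $\overline{b}$ is infinitesimally close to its standard part. Set $a := \text{st}(\overline{b}) = (\text{st}(b_1), \dots, \text{st}(b_n))$, which lies in $D$ by hypothesis, so that $f(a)$ is defined. Since $a \in D$ and $f$ is $A$-definable, the value $f(a) \in A$ is pinned down by the $\mathscr{L}$-formula expressing $(a, f(a)) \in \text{graph}(f)$; as $A \preceq B$ this transfers, giving $f_B(a) = f(a)$. The goal is to show that $f_B(\overline{b})$ lies in every open interval $(c,d)$ with $c, d \in A$ and $c < f(a) < d$; by Definition \ref{def.tame} and the uniqueness in Remark \ref{rem.tame} this is exactly the assertion that $f_B(\overline{b})$ is $A$-bounded with $\text{st}(f_B(\overline{b})) = f(a)$.

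First I would record the elementary observation that $\overline{b}$ lies in every $A$-rational open box around $a$. If $c' \in A$ satisfies $c' < a_i$, then $c' < b_i$: otherwise $b_i \leq c'$ would give, by Lemma \ref{lem.st_ord_pres} and Remark \ref{rem.st_part_map.i}, the contradiction $a_i = \text{st}(b_i) \leq \text{st}(c') = c' < a_i$; and symmetrically, if $d' \in A$ satisfies $a_i < d'$ then $b_i < d'$. Hence $c_i < a_i < d_i$ with $c_i, d_i \in A$ forces $\overline{b} \in \prod_i (c_i, d_i)$.

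Now fix $c, d \in A$ with $c < f(a) < d$. Continuity of $f$ at the point $a \in D$, expressed in the order topology, asserts the existence of $c_i, d_i \in A$ with $c_i < a_i < d_i$ such that $f$ maps $D \cap \prod_i (c_i, d_i)$ into $(c,d)$. This is a first-order statement true in $A$, so I would extract witnesses $c_i, d_i$ inside $A$ and transfer the resulting sentence, all of whose parameters now lie in $A$, to $B$ via $A \preceq B$: in $B$, $f_B$ maps $D_B \cap \prod_i (c_i, d_i)$ into $(c,d)$. By the previous paragraph $\overline{b} \in \prod_i (c_i, d_i)$, and $\overline{b} \in D_B$ by hypothesis, so $c < f_B(\overline{b}) < d$. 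As $c, d$ were arbitrary, the goal stated in the first paragraph is achieved, completing the proof.

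The only delicate point is the handling of continuity as a first-order property and its transfer: one must formulate continuity of $f$ at $a$ purely order-topologically, so as not to presuppose a group or metric structure on the $o$-minimal $A$, and, crucially, extract the box-witnesses $c_i, d_i$ inside $A$ before transferring, so that the transferred box has $A$-endpoints and the standard-part estimates of the second paragraph apply. Everything else is bookkeeping with Lemma \ref{lem.st_ord_pres} and Definition \ref{def.tame}.
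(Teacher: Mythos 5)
Your proof is correct; it necessarily takes a different route from the paper's, because the paper gives no argument here at all --- its proof is the single line \enquote{See 1.13 in \cite{vdD/Lewenberg.T-convexity_and_tame_extensions}}. Your self-contained argument checks out. The reduction in your first paragraph is valid: since $(A,<)$ is dense without endpoints, membership of $f_B(\overline{b})$ in every interval with $A$-endpoints around $f(\text{st}(\overline{b}))$ yields both $A$-boundedness and, via Definition \ref{def.tame} and Remark \ref{rem.tame}, the identity $\text{st}(f_B(\overline{b})) = f(\text{st}(\overline{b}))$. The box estimate $c_i < b_i < d_i$ follows from Lemma \ref{lem.st_ord_pres} and Remark \ref{rem.st_part_map.i} exactly as you say. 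Most importantly, you handle the one genuinely delicate step correctly: continuity is used semantically inside $A$ to produce witnesses $c_i, d_i \in A$, and only then is the universally quantified implication --- all of whose parameters lie in $A$ --- transferred to $B$ by elementarity; transferring \enquote{continuity at $a$} wholesale would produce a box with endpoints in $B$ that need not contain $\overline{b}$, and the argument would break. It is worth recording what your route buys beyond the citation: nothing in your argument uses o-minimality (only $A \preceq B$, tameness, and that $(A,<)$ is a dense linear order without endpoints), and it is purely order-topological, with no appeal to group or field operations; so it in fact proves the lemma for arbitrary tame elementary pairs of expansions of dense linear orders without endpoints, a statement more general than the one the paper imports from \cite{vdD/Lewenberg.T-convexity_and_tame_extensions}.
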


\begin{proof}
See 1.13 in \cite{vdD/Lewenberg.T-convexity_and_tame_extensions}.
\end{proof}

\begin{proposition}\label{prop.retract_tame_subgrp}
	Let  $ \Delta \subseteq \Gamma $ be an extension of divisible $ o $-groups. Suppose that $ \Delta $ is tame and cofinal in $ \Gamma $. Then $ \emph{st}: \Gamma  \lonto \Delta$ is a  surjective $ o $-group homomorphism. In particular, $ \emph{st}: \Gamma \lonto \Delta $ is a retract of the $ o $-group extension $ \Delta \subseteq \Gamma $.
\end{proposition}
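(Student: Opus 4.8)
The plan is to verify the three requirements separately---surjectivity, order-preservation, and additivity---of which only the last carries any content. First I would record that cofinality of $ \Delta $ in $ \Gamma $ forces $ \text{c.h.}_\Gamma(\Delta) = \Gamma $: every $ b \in \Gamma $ lies between $ -a $ and $ a $ for some $ a \in \Delta^{>0} $, so $ \text{st} = \text{st}^\Gamma_\Delta $ is defined on all of $ \Gamma $. Surjectivity and the retract statement are then immediate from Remark \ref{rem.st_part_map.i}: since $ \text{st}(a) = a $ for every $ a \in \Delta $, the composite of the inclusion $ \Delta \linto \Gamma $ with $ \text{st} $ is $ \text{id}_\Delta $, which simultaneously shows that $ \text{st} $ surjects onto $ \Delta $ and that it is a retract of the extension $ \Delta \subseteq \Gamma $. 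Order-preservation is exactly Lemma \ref{lem.st_ord_pres}. It remains to prove that $ \text{st} $ is a group homomorphism, and since $ 0 \in \Delta $ gives $ \text{st}(0) = 0 $, everything reduces to additivity $ \text{st}(b_1 + b_2) = \text{st}(b_1) + \text{st}(b_2) $.

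For additivity I would give the short model-theoretic argument first. The theory of divisible ordered abelian groups is $ o $-minimal and admits quantifier elimination in $ \mathscr{L}^{\text{og}} $, so the inclusion $ \Delta \subseteq \Gamma $ is automatically an elementary extension $ \Delta \preceq \Gamma $. Addition is a continuous $ \Delta $-definable function $ + : \Delta^2 \lra \Delta $, and for $ (b_1, b_2) \in \Gamma^2 $ the $ \Delta $-boundedness established above, together with $ \text{st}(b_1, b_2) = (\text{st}(b_1), \text{st}(b_2)) \in \Delta^2 $, lets me feed $ (b_1, b_2) $ into Lemma \ref{lem.st_cont_def_comm}. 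Its conclusion reads precisely $ \text{st}(b_1 + b_2) = \text{st}(b_1) + \text{st}(b_2) $.

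To keep the argument self-contained I would also give the purely order-theoretic proof, which is where the real point lies. Here I use the third characterization of tameness from the Fact (a routine check from the definitions identifies the witness with the standard part): $ \text{st}(b) $ is the unique $ a \in \Delta $ with $ |b - a| < a' $ for every $ a' \in \Delta^{>0} $, i.e. $ b - \text{st}(b) $ is infinitesimal relative to $ \Delta $. Writing $ d_i := b_i - \text{st}(b_i) $, each $ d_i $ is infinitesimal, and the crucial observation is that infinitesimals are closed under addition: given $ a' \in \Delta^{>0} $, divisibility provides $ a'/2 \in \Delta^{>0} $, whence $ |d_1 + d_2| \leq |d_1| + |d_2| < a'/2 + a'/2 = a' $. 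Since $ b_1 + b_2 = \bigl( \text{st}(b_1) + \text{st}(b_2) \bigr) + (d_1 + d_2) $ with $ \text{st}(b_1) + \text{st}(b_2) \in \Delta $ and $ d_1 + d_2 $ infinitesimal, the uniqueness in characterization (iii) forces $ \text{st}(b_1 + b_2) = \text{st}(b_1) + \text{st}(b_2) $. The one genuine obstacle in the whole proof is this closure of infinitesimals under addition, and it is exactly where divisibility of $ \Delta $ is used; without it the halving step fails and $ \text{st} $ need not be additive.
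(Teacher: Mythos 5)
Your proposal is correct, and its first half is essentially the paper's proof: cofinality gives $\text{c.h.}_\Gamma(\Delta)=\Gamma$, order-preservation is Lemma \ref{lem.st_ord_pres}, surjectivity and the retract property come from $\text{st}_{\upharpoonright\Delta}=\text{id}_\Delta$, and additivity follows by feeding $+$ into Lemma \ref{lem.st_cont_def_comm} after noting that $\Delta\preceq\Gamma$ is an elementary extension of o-minimal structures (you invoke quantifier elimination, the paper invokes model completeness; same effect). What you add beyond the paper is the purely order-theoretic proof of additivity via item (iii) of the Fact: writing $b_i=\text{st}(b_i)+d_i$ with $d_i$ infinitesimal over $\Delta$, showing infinitesimals are closed under addition, and concluding by uniqueness of the witness in (iii). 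This is a genuine gain: it makes the proposition self-contained and independent of the o-minimality machinery, in the spirit of the order-theoretic alternatives the paper only supplies later, inside the proof of Theorem \ref{the.equiv_sect_DOAGS}. (Both of your intermediate claims --- that the witness in (iii) is unique and equals $\text{st}(b)$, and that $d_1+d_2$ is infinitesimal --- check out under the standing hypotheses.) One small correction to your closing remark: divisibility is not ``exactly'' what closure of infinitesimals under addition requires; density of $\Delta$ already suffices, since given $a'\in\Delta^{>0}$ one may pick $a''\in\Delta$ with $0<a''<a'$ and bound $|d_1|<a''$, $|d_2|<a'-a''$. So the halving step is a convenience permitted by the divisibility hypothesis rather than the precise obstruction; this does not affect the validity of your proof.
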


\begin{proof}
	First note that $ \text{c.h.}_{\Gamma}(\Delta) = \Gamma $ since $ \Delta $ is cofinal in $ \Gamma $, therefore $ \text{st} $ is a map $ \Gamma \lonto \Delta $. The map $ \text{st}: \Gamma \lonto \Delta $ is order-preserving by Lemma \ref{lem.st_ord_pres}.  Let $ \mathscr{L}^{\text{og}}:= \{+, -, 0, \leq\} $ be the language of $ o $-groups. Since the $ \mathscr{L}^{\text{og}} $-theory of divisible $ o  $-groups is model complete and o-minimal (\cite[Corollary 3.1.17]{marker_model_2002}), $ \Delta \subseteq \Gamma $ is an elementary extension of $ o $-minimal $ \mathscr{L}^{\text{og}} $-structures. Moreover,  $ \Delta $ and  $ \Gamma $ are topological groups with respect to the order topology (i.e., $ + $ and  $ - $ are continuous functions), therefore it follows from Lemma \ref{lem.st_cont_def_comm} that $ \text{st}(\gamma_1+ \gamma_2) = \text{st}(\gamma_1) + \text{st}(\gamma_2)  $ for all $  \gamma_1, \gamma_2 \in \Gamma $, and thus $ \text{st}: \Gamma \lonto \Delta $ is a surjective $ o $-group homomorphism such that $ \text{st}_{\upharpoonright \Delta} = \text{id}_{\Delta} $, so it is a retract of $ \Delta \subseteq \Gamma $. 
\end{proof}

\section{Main Theorem}

\begin{theorem}\label{the.equiv_sect_DOAGS}
	Let $ f : \Gamma \lonto \Delta $ be a surjective $ o $-group homomorphism of divisible $ o $-groups and $ \Delta' \subseteq \Gamma $ be a subgroup. The following are equivalent:
	\begin{enumerate}[\normalfont(i)]
		\item The map $ f_{\upharpoonright \Delta'} : \Delta' \lra \Delta $ is an $ o $-group isomorphism. In particular,   $ (f_{\upharpoonright \Delta'})^{-1}: \Delta \linto \Gamma$ is a cross-section of the $ o $-group homomorphism $f : \Gamma \lonto \Delta$.
		\item  $ \Delta'$ is divisible, tame and cofinal in $ \Gamma $, and $ \emph{ker}(f)= \{ \gamma \in \Gamma\mid \emph{st}(\gamma)=0 \}  $.
		\item $ \Delta'$ is divisible, tame and cofinal in $ \Gamma $, and $ f(\gamma)\geq 0$ if and only if $ \emph{st}(\gamma) \geq 0 $ for all $ \gamma \in \Gamma $.
	\end{enumerate}
	In particular, if any of the items \emph{(i)} - \emph{(iii)} hold, then:
\begin{enumerate}[\normalfont-]
\item 	$ \Delta' $ is divisible, tame and cofinal in $ \Gamma $,
\item $ \emph{st}(\gamma) $ is the unique element in  $ \Delta' $ such that  $ f(\emph{st}(\gamma)) =f(\gamma) $ for all $ \gamma \in \Gamma $, and
\item the standard part map $ \emph{st}: \Gamma \lonto \Delta' $ is a retract of the $ o $-group extension $ \Delta' \subseteq \Gamma $.
\end{enumerate}
\end{theorem}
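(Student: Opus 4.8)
The plan is to prove $(i)\Rightarrow(ii)$, $(i)\Rightarrow(iii)$, $(iii)\Rightarrow(ii)$ and $(ii)\Rightarrow(i)$, harvesting the three concluding bullets along the way. Throughout I would write $g:=f_{\upharpoonright\Delta'}$ and use that $\ker f$ is a convex subgroup of $\Gamma$ (if $0\le\gamma\le k$ with $f(k)=0$ then $0\le f(\gamma)\le 0$ forces $\gamma\in\ker f$).

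For the two implications out of (i) I would assume $g\colon\Delta'\to\Delta$ is an $o$-group isomorphism. Divisibility of $\Delta'$ transports from $\Delta$ along $g^{-1}$. For cofinality, given $\gamma>0$ I set $\delta_0:=g^{-1}(f(\gamma))$ and pick a positive $\delta''\in\Delta'$ (the degenerate case $\Delta=\{0\}$ aside); then $f(\delta_0+\delta'')>f(\gamma)$ forces $\delta_0+\delta''>\gamma$ since $f$ preserves order. The crux is tameness together with the identification of the standard part: I claim $\delta_0=g^{-1}(f(\gamma))$ witnesses tameness of $\Delta'$ at $\gamma$, because any $\delta'\in\Delta'$ lying strictly between $\gamma$ and $\delta_0$ would be squeezed by order-preservation into $f(\delta')=f(\gamma)=f(\delta_0)$, contradicting injectivity of $g$. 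By the uniqueness in Remark \ref{rem.tame} this gives $\text{st}(\gamma)=g^{-1}(f(\gamma))$, i.e.\ $f(\text{st}(\gamma))=f(\gamma)$ for all $\gamma$, which is exactly the second concluding bullet (uniqueness from injectivity of $g$). Conditions (ii) and (iii) then drop out: $\text{st}(\gamma)=0\Leftrightarrow f(\gamma)=0$, and $\text{st}(\gamma)\ge0\Leftrightarrow f(\text{st}(\gamma))\ge0\Leftrightarrow f(\gamma)\ge0$, using that $g$ is an order isomorphism.

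For $(iii)\Rightarrow(ii)$ I would apply the sign equivalence to both $\gamma$ and $-\gamma$ and use that $f$ and $\text{st}$ are homomorphisms to obtain $f(\gamma)=0\Leftrightarrow\text{st}(\gamma)=0$, which is the kernel condition of (ii). The substantial direction is $(ii)\Rightarrow(i)$: here $\Delta'$ is tame and cofinal, so Proposition \ref{prop.retract_tame_subgrp} makes $\text{st}\colon\Gamma\lonto\Delta'$ a surjective $o$-group homomorphism with $\text{st}_{\upharpoonright\Delta'}=\text{id}$ (the third concluding bullet). It then remains to check $g$ is a bijective, order-reflecting homomorphism. Injectivity: if $f(\delta')=0$ for $\delta'\in\Delta'$ then $\text{st}(\delta')=0$ by hypothesis, yet $\text{st}(\delta')=\delta'$ by Remark \ref{rem.st_part_map.i}, so $\delta'=0$. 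Surjectivity: for $\delta\in\Delta$ pick $\gamma$ with $f(\gamma)=\delta$; since $\text{st}$ is a homomorphism fixing $\Delta'$ we get $\text{st}(\gamma-\text{st}(\gamma))=0$, hence $\gamma-\text{st}(\gamma)\in\ker f$ and $f(\text{st}(\gamma))=\delta$ with $\text{st}(\gamma)\in\Delta'$. Order-reflection: if $\delta'<0$ in $\Delta'$ then $\delta'\notin\ker f$ and $f(\delta')\le0$, so $f(\delta')<0$; contrapositively $f$ reflects positivity on $\Delta'$, making $g^{-1}$ order-preserving.

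I expect the main obstacle to be the forward identification $\text{st}=g^{-1}\circ f$: recognizing that $g^{-1}(f(\gamma))$ is precisely the tameness witness at $\gamma$, via the order-preservation-plus-injectivity squeeze, is what ties the order-theoretic standard part to the algebraic cross-section and renders the kernel and sign conditions of (ii) and (iii) automatic. By contrast the converse is largely bookkeeping once Proposition \ref{prop.retract_tame_subgrp} supplies the homomorphism property of $\text{st}$.
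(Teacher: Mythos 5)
Your proposal is correct and follows essentially the same route as the paper: the crux in both is the identification $\mathrm{st} = (f_{\upharpoonright\Delta'})^{-1}\circ f$ under (i) via the squeeze argument (your order-preservation squeeze is exactly the paper's appeal to convexity of $\ker f$), and both use Proposition \ref{prop.retract_tame_subgrp} to make $\mathrm{st}$ a surjective homomorphism in the converse direction, which yields injectivity, surjectivity of $f_{\upharpoonright\Delta'}$, and the retract bullet just as in the paper. The only difference is bookkeeping: you close the cycle as (i)$\Rightarrow$(iii)$\Rightarrow$(ii)$\Rightarrow$(i), whereas the paper proves (ii)$\Leftrightarrow$(iii) separately, so its (ii)$\Rightarrow$(iii) needs a small dedicated tameness argument that your ordering renders unnecessary.
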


\begin{proof}
	
	\underline{(i) $ \Ra $  (ii).} Clearly $ \Delta' $ is divisible, therefore $ (\Delta', <) $ is a dense linear order without endpoints. To prove that $ \Delta' $ is cofinal in $ \Gamma $, pick $ \gamma \in \Gamma $ with $ 0 < \gamma $. Since $ (\Delta, <) $ has no end points, there exists $ \delta \in \Delta $ with $ f(\gamma) < \delta $, and since $ f_{\upharpoonright \Delta'} $ is surjective, there exists $ \delta' \in \Delta'$ such that $ f(\delta') = \delta $. But then $ \gamma < \delta' $, as otherwise  $ \delta' \leq \gamma $ would imply that  $ \delta =f(\delta') \leq f(\gamma) $, hence  $ \Delta' $ is cofinal in $ \Gamma $ and thus every $ \gamma \in \Gamma $ is $ \Delta' $-bounded. To prove that $ \Delta'$ is tame in $\Gamma $, pick any $ \gamma \in \Gamma $ and assume without loss of generality that $ 0 < \gamma $  (otherwise replace $ \gamma  $ by $ -\gamma $). Since $ f_{\upharpoonright \Delta'} $ is bijective by assumption,  there exists a unique $  \delta' \in \Delta'$ such that $ f(\gamma) = f(\delta') $, i.e., $ \delta'- \gamma\in \text{ker}(f)  $. Note that $0 \leq \delta' $, as otherwise $ \delta' < 0 $ implies that  $ f(\gamma) =f(\delta') < f(0) $  since $ f $ is order-preserving and $ f_{\upharpoonright \Delta'} $ is injective, and $ 0 < \gamma $ implies $ f(0) \leq  f(\gamma) $, giving the required contradiction. It is now claimed that $ \text{st}(\gamma) = \delta' $. If $ \gamma \in \Delta' $, then  $ \gamma = \delta' $ by choice of  $ \delta' \in \Delta' $ and thus  $ \text{st}(\gamma)= \text{st}(\delta') = \delta' $. If $ \gamma \notin \Delta' $, then there are two possible cases:

	\begin{enumerate}[-]
		\item Case 1:  $  \gamma <  \delta' $. Assume  for contradiction that there exists $ \delta'_1 \in \Delta'$  such that $    \gamma < \delta_1'< \delta'  $.  Then $ 0 < \delta_1' - \gamma < \delta' - \gamma $, and since  $\text{ker}(f)$ is convex in $ \Gamma $ and $ \delta' - \gamma \in \text{ker}(f) $, it follows that $ \delta_1'- \gamma \in  \text{ker}(f)$, hence $ f(\delta_1') = f(\gamma) = f(\delta') $, contradicting  uniqueness of $ \delta' \in \Delta'$.
		\item Case 2: $  \delta' < \gamma $. Analogous to Case 1.
	\end{enumerate}

\noindent Therefore $ \Delta' $ is tame in $\Gamma $. In particular, this shows that for every $ \gamma \in \Gamma $,  $ \text{st}(\gamma)  $ is the unique element in $ \Delta'$ such that  $ f(\gamma) =  f(\text{st}(\gamma)  )  $, i.e., $ \text{st}(\gamma)  $ is the unique element in $ \Delta' $ such that $ \eta_{\gamma}: =\gamma - \text{st}(\gamma) \in \text{ker}(f) $, hence \[
	f(\gamma) = 0 \iff f(\text{st}(\gamma)+\eta_{\gamma}) = 0 \iff f(\text{st}(\gamma)) = 0 \iff \text{st}(\gamma) = 0
,\] 
 where the last equivalence follows from the assumption that $ f_{\upharpoonright \Delta'}  $ is injective. 

 \underline{(ii) $ \Ra $  (i).}  $ \text{ker}(f)=\{ \gamma\in\Gamma\mid\text{st}(\gamma)=0 \}   $ implies  that $ \Delta' \cap \text{ker}(f) = (0)$, and thus $ f_{\upharpoonright \Delta'} $ is injective.  To show that 	$ f_{\upharpoonright \Delta'}$ is surjective it suffices to prove that $ f(\gamma) = f(\text{st}(\gamma)) $ for all $ \gamma \in \Gamma$ (note that since $ \Delta' $ is cofinal in  $ \Gamma $,  $ \text{st}(\gamma) $ exists for all $ \gamma \in \Gamma $). Since $ \text{ker}(f)=\{ \gamma\in\Gamma\mid\text{st}(\gamma)=0 \}   $, it suffices in turn to show that $ \text{st}(\gamma-\text{st}(\gamma))= 0 $ for all $ \gamma \in \Gamma $. But this is indeed the case, since \[
	 \text{st}(\gamma-\text{st}(\gamma))= 0 \overset{(*)}{\iff} \text{st}(\gamma) - \text{st}(\text{st}(\gamma)) = 0  \iff \text{st}(\gamma) = \text{st}(\text{st}(\gamma)) = \text{st}(\gamma),
 \] where $ (*) $ follows by Proposition \ref{prop.retract_tame_subgrp}. An alternative proof without Proposition \ref{prop.retract_tame_subgrp} is as follows. If $ \gamma \in \Delta' $, then $ \gamma = \text{st}(\gamma) $ and thus  $ f(\gamma) = f(\text{st}(\gamma)) $. Let now $ \gamma \in \Gamma \setminus \Delta' $, assume without loss of generality that $ 0  < \gamma$ (otherwise replace $ \gamma $ by $- \gamma $), and assume for contradiction that $ \text{st}(\gamma- \text{st}(\gamma))\neq 0 $. Note that $ \gamma \notin \Delta' $ implies  $ \gamma - \text{st}(\gamma) \neq \text{st}(\gamma - \text{st}(\gamma))$; moreover $ \text{st}(\gamma) \neq 0 $, as otherwise $ \text{st}(\gamma - \text{st}(\gamma))=0 $, therefore $ 0 < \text{st}(\gamma) $ by Lemma \ref{lem.st_ord_pres}.
\begin{enumerate}[\normalfont-]
	\item 	Case 1: $ 0 < \gamma < \text{st}(\gamma) $. Then $  \gamma -\text{st}(\gamma)< 0 $, and there are  2 possible subcases:
	\begin{enumerate}[\normalfont-]
		\item Subcase 1.1: $ \text{st}(\gamma - \text{st}(\gamma)) < \gamma- \text{st}(\gamma) < 0 $.  In this case, by Remark \ref{rem.st_part_map.ii} there exists $ \delta' \in \Delta' $ such that  $   \gamma - \text{st}(\gamma) < \delta' < 0$, hence $ \gamma  < \delta+ \text{st}(\gamma) < \text{st}(\gamma) $ and $ \delta'+ \text{st}(\gamma) \in \Delta' $ is a contradiction to tameness of $ \Delta' $ in $ \Gamma $.
		\item Subcase 1.2: $  \gamma- \text{st}(\gamma) < \text{st}(\gamma - \text{st}(\gamma)) < 0 $. In this case, $ \gamma  < \text{st}(\gamma) + \text{st}(\gamma - \text{st}(\gamma))  < \text{st}(\gamma)$ and $ \text{st}(\gamma) +\text{st}(\gamma - \text{st}(\gamma))  \in \Delta'$ is  a contradiction to tameness of $ \Delta' $ in $ \Gamma $.
	\end{enumerate}
\item Case 2: $ 0 < \text{st}(\gamma) < \gamma $. Analogous to Case 1.
\end{enumerate}
In each of the cases above a contradiction is reached, hence $  \text{st}(\gamma - \text{st}(\gamma)) =0  $ for all $ \gamma \in \Gamma $, i.e., $f(\gamma) = f(\text{st}(\gamma)) $ for all $ \gamma \in \Gamma $,  and thus $ f_{\upharpoonright \Delta'}: \Delta' \lra \Gamma $ is surjective, as required.

\underline{(ii) $ \Leftrightarrow $ (iii).}  The implication (iii) $ \Ra$ (ii) is clear, so suppose that item (ii) holds, i.e., $ f(\gamma) =  0$ if and only if $ \text{st}(\gamma) = 0 $ for all $ \gamma \in \Gamma $. It therefore suffices to show that $ f(\gamma) >0 $ if and only if $ \text{st}(\gamma) >0$ for all $ \gamma \in \Gamma$. Pick $ \gamma \in \Gamma $. It will be shown that $ f(\gamma)>0 $ implies $ \text{st}(\gamma)>0 $; the proof of the other direction is similar. Assume for contradiction that $ f(\gamma) >0 $ and $ \text{st}(\gamma) \leq 0 $. Since $ \text{st}(\gamma) =0  $ implies $  f(\gamma)=0$, it must be the case that $ \text{st}(\gamma) < 0 $, and thus $ \gamma \leq 0 $, as otherwise  $  \text{st}(\gamma) < 0  < \gamma$ contradicts tameness of $ \Delta' $ in $ \Gamma $. On the other hand, $ 0< f(\gamma) $ implies that $ 0 \leq \gamma $, as otherwise $  \gamma < 0$ implies  $ f(\gamma) \leq 0$,  therefore $ 0 = \gamma $ and thus  $ f(0) = f(\gamma) >0$, a contradiction.

The last statement of the theorem is clear by Proposition \ref{prop.retract_tame_subgrp}. Alternatively, suppose that any of the items (i) - (iii) hold, so that $ \Delta' $ is tame and cofinal in $ \Gamma $, and $ f_{\upharpoonright \Delta'}: \Delta' \lra \Delta $ is an $ o $-group isomorphism. Then it follows from the proof of the implication (i) $ \Ra $  (ii) that  $ \text{st}(\gamma)  $ is the unique element in $ \Delta' $ such that $ f(\text{st}(\gamma)) = f(\gamma) $  for all $ \gamma \in \Gamma $, hence $\text{st}(\gamma) = (f_{\upharpoonright \Delta'})^{-1}(f(\gamma))$ for all $ \gamma \in \Gamma $ and thus  $ \text{st} =  (f_{\upharpoonright \Delta'})^{-1} \circ f$ is a surjective $ o $-group homomorphism such that $ \text{st}_{\upharpoonright \Delta'} = \text{id}_{\Delta'} $, therefore $ \text{st} : \Gamma \lra \Delta' $ is a retract of the $ o $-group extension $ \Delta' \subseteq \Gamma $. 
\end{proof}

	Let $ f : \Gamma \lonto \Delta $ be a surjective $ o $-group homomorphism of divisible $ o $-groups. Then  subgroups $ \Delta' \subseteq \Gamma $ satisfying any of the equivalent conditions (i) - (iii) in Theorem \ref{the.equiv_sect_DOAGS} exist. Indeed, it is well-known that condition (i) in Theorem \ref{the.equiv_sect_DOAGS} is equivalent to 
	\begin{enumerate}[(a)]
	\item $ \Gamma = \Delta' \oplus \text{ker}(f) $ as groups.
		 \item $ \Delta' $ is  a subgroup maximal for subset inclusion in $ \Gamma $ with  $ \Delta' \cap \text{ker}(f) = (0) $.
\end{enumerate}
The equivalence  (i) $ \Leftrightarrow  $ (a) and the implication (a) $ \Ra $ (b) are straightforward. The implication (b) $ \Ra $  (a) follows from the fact that $ \text{ker}(f) $ is a divisible subgroup of $ \Gamma $  (since $ \text{ker}(f) $ is convex in $ \Gamma $) and because divisible groups of a group are absolute direct summands, see  \cite[Theorem 21.2]{fuchs.inf_ab_gps_vol_1}. Applying Zorn's lemma ensures existence of subgroups $ \Delta' \subseteq \Gamma $ satisfying (b). In particular, subgroups $ \Delta'  \subseteq \Gamma$   maximal for subset inclusion in $ \Gamma $ with  $ \Delta' \cap \text{ker}(f) = (0) $ are tame and cofinal in $ \Gamma $.

\section{An Application to Real Closed Valued Fields}\label{sec.rcvf}

Recall that a \textit{real closed valued field}  is a valued field $ (K, v) $ (see \cite{engler/prestel.valued_fields} or \cite[Section 3]{aschenbrenner/vdD/vdHoeven.asympt}) such that $ K $ is a real closed field and $ v $ is a  \textit{convex valuation} (also called \textit{order-compatible valuation}) on $ K $, that is, $ 0 < a < b $ implies  $ v(b) \leq v(a) $ for all $ a, b \in K $. Equivalently, a real closed valued field is a pair $ (K, V) $ where $ K $ is a real closed field and $ V $ is a convex subring. If $ (K, v)  $ is a real closed valued field, then  write $ V_v := \{ a \in K \mid v (a) \geq 0\} $ for its corresponding valuation ring, and regard $ K^{>0} $ as a multiplicative group.

\begin{corollary}\label{cor.equiv_sect_val_grp_rcvf}
	Let $ (K, v) $ be a real closed valued field with value group $ \Gamma $ and $G \subseteq K^{>0} $ be a subgroup. The following are equivalent:
	\begin{enumerate}[\normalfont(i)]
		\item $ G $ is a \emph{monomial group of} $ (K, v) $, that is, the map $ v_{\upharpoonright G} : G \lra \Gamma $ is a group isomorphism. In particular,   $ (v_{\upharpoonright G})^{-1}: \Gamma \linto K^{>0}$ is a cross-section of the group homomorphism $v_{\upharpoonright K^{>0}}: K^{>0} \lonto \Gamma$.
		\item $ G$ is divisible, tame and cofinal in $ K^{>0} $, and $ \emph{ker}(v_{\upharpoonright K^{>0}}) = \{ r \in K^{>0}\mid \emph{st}(r) = 1 \}  $.

		\item $ G$ is divisible, tame and cofinal in $ K^{>0} $, and $ v(r) \geq 0 $ if and only if $ \emph{st}(r) \leq 1 $ for all $ r \in K^{>0} $.
		\item $ G$  is divisible, tame and cofinal in $ K^{>0} $, and $ V_v = \{a \in K \mid a =0 \text{ or } \emph{st}(|a|)\leq 1 \} $.

	\end{enumerate}
	In particular, if any of the items \emph{(i)} - \emph{(iv)} hold, then:
\begin{enumerate}[\normalfont-]
\item 	$ G $ is divisible, tame and cofinal in $ K^{>0}$,
\item $ \emph{st}(r) $ is the unique element in  $ G $ such that  $ v(\emph{st}(r)) =v(r) $ for all $ r \in K^{>0} $, and
\item the standard part map $ \emph{st}: K^{>0}\lonto G $  is a retract of the $ o $-group extension $ G \subseteq K^{>0}$.
\end{enumerate}
\end{corollary}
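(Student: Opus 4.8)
The plan is to derive the corollary from Theorem~\ref{the.equiv_sect_DOAGS} applied to the valuation, after correcting for a single sign. Since $K$ is real closed, $K^{>0}$ is a divisible $o$-group (written multiplicatively, with neutral element $1$), the value group $\Gamma$ is a divisible (additively written) $o$-group, and the restriction $v_{\upharpoonright K^{>0}} : K^{>0} \lonto \Gamma$ is a surjective group homomorphism. The one genuine subtlety is that $v$ is \emph{convex}, hence \emph{order-reversing} on $K^{>0}$: $1 < r < s$ forces $v(s) \leq v(r)$, so $v_{\upharpoonright K^{>0}}$ is not order-preserving and the theorem cannot be applied to it verbatim. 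To enter the hypotheses of Theorem~\ref{the.equiv_sect_DOAGS} I would compose with the negation automorphism of $\Gamma$ and set $f := -\,v_{\upharpoonright K^{>0}} : K^{>0} \lonto \Gamma$, which is an order-preserving surjective $o$-group homomorphism with $\ker f = \ker v_{\upharpoonright K^{>0}}$. Crucially, the standard part map $\text{st} : K^{>0} \lonto G$ of the tame pair $G \subseteq K^{>0}$, being intrinsic to the order of $K^{>0}$ and the subgroup $G$, is unaffected by the passage from $v$ to $f$.

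I would then apply Theorem~\ref{the.equiv_sect_DOAGS} with ambient group $K^{>0}$, target $\Gamma$, subgroup $G$ and homomorphism $f$, and translate each clause. Clause (i) of the theorem, that $f_{\upharpoonright G}$ is an $o$-group isomorphism, corresponds to clause (i) of the corollary, that $v_{\upharpoonright G}$ is a group isomorphism: as a restriction of the order-reversing $v$, the map $v_{\upharpoonright G}$ is order-reversing, and a bijective order-reversing map between total orders is automatically an order anti-isomorphism, so that $f_{\upharpoonright G} = -v_{\upharpoonright G}$ is order-preserving as soon as $v_{\upharpoonright G}$ is a group bijection; thus only the group-theoretic content needs tracking. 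Clause (ii) transcribes directly once the neutral element of $K^{>0}$ is written as $1$ and one uses $\ker f = \ker v_{\upharpoonright K^{>0}}$. For clause (iii) I would use that the theorem's condition $f(\gamma) \geq 0 \Leftrightarrow \text{st}(\gamma) \geq 0$ is, because $\text{st}$ is a group homomorphism, equivalent to its dual $f(\gamma) \leq 0 \Leftrightarrow \text{st}(\gamma) \leq 0$; substituting $f = -v$ and the multiplicative neutral element turns this into $v(r) \geq 0 \Leftrightarrow \text{st}(r) \leq 1$, which is exactly clause (iii) of the corollary.

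It remains to incorporate clause (iv), which has no analogue in the theorem. First I would note that $v(-1) = 0$, since $2\,v(-1) = v(1) = 0$ and $\Gamma$ is torsion-free, whence $v(a) = v(|a|)$ for every nonzero $a \in K$. Consequently $V_v = \{a \in K \mid v(a) \geq 0\}$ is precisely $\{0\}$ together with the nonzero $a$ satisfying $v(|a|) \geq 0$, and applying clause (iii) to $|a| \in K^{>0}$ rewrites $v(|a|) \geq 0$ as $\text{st}(|a|) \leq 1$; this yields the equivalence of (iii) and (iv). Finally, the three concluding assertions of the corollary are the images under the same dictionary (in particular $v(\text{st}(r)) = v(r) \Leftrightarrow f(\text{st}(r)) = f(r)$) of the concluding assertions of Theorem~\ref{the.equiv_sect_DOAGS}, and so transfer with no further work. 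I expect the only real difficulty to be bookkeeping: keeping the direction of every inequality correct through the order-reversal of the convex valuation and the additive-to-multiplicative change of notation, so that the signs in clauses (ii)--(iv) come out as stated.
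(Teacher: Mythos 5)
Your proposal is correct and takes essentially the same approach as the paper: reduce to Theorem \ref{the.equiv_sect_DOAGS} by compensating for the order-reversal of the convex valuation on $K^{>0}$, then translate clauses (i)--(iii) through the resulting dictionary and obtain (iv) from (iii) via $v(a)=v(|a|)$ for $a\neq 0$. The only (cosmetic) difference is that you fix the sign by composing with negation, applying the theorem to $f:=-v_{\upharpoonright K^{>0}}:K^{>0}\lonto \Gamma$, whereas the paper keeps $v_{\upharpoonright K^{>0}}$ and reverses the order on the target, applying the theorem to $\mathrm{id}_{\Gamma}\circ v_{\upharpoonright K^{>0}}:K^{>0}\lonto\Gamma^{\mathrm{op}}$; these maneuvers are interchangeable, and your explicit inversion argument for matching clause (iii) (using that $\mathrm{st}$ commutes with inverses) spells out a step the paper leaves implicit.
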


\begin{proof} 
	Let $ \Gamma^{\text{op}} $ be the $ o $-group obtained by reversing the order of $ \Gamma $, so that the identity map $ \text{id}_{\Gamma}:\Gamma \lra \Gamma^{\text{op}} $ is an order-reversing group isomorphism. Since $ (K,v) $ is a real closed valued field, $ K^{>0} $ and $ \Gamma $ are divisible  $ o $-groups and the composite map $\text{id}_{\Gamma}\circ v_{\upharpoonright K^{>0}} : K^{>0} \onto \Gamma \ra \Gamma^{\text{op}} $ is a surjective $ o $-group homomorphism such that $ \text{ker}(\text{id}_{\Gamma} \circ v_{\upharpoonright K^{>0}}) = \text{ker}(v_{\upharpoonright K^{>0}} )  $, and thus the equivalence of items (i) - (iii)  follows from Theorem \ref{the.equiv_sect_DOAGS} applied to $ \text{id}_{\Gamma} \circ v_{\upharpoonright K^{>0}} $. Moreover,  the equivalence of items (iii) and  (iv) is clear since $ V_v = \{a \in K \mid v(a) \geq 0\} $ and $ v(a) = v(-a) $ for all $ a \in K $.
\end{proof}

\begin{example}\label{ex.sect_hahn_rcvf}
	Let  $ K $ be a real closed field and $ \Gamma $ be a divisible  $ o $-group. Then the field of Hahn series $ K((\Gamma)):= K((x^{\Gamma})) $ is a real closed valued field with value group $ \Gamma $ and  $ x^{\Gamma} := \{x^{\gamma}\mid \gamma \in \Gamma\} $ is a divisible subgroup of $ K((\Gamma))^{>0} $ such that $ v_{\upharpoonright x^{\Gamma}}: x^{\Gamma} \lra \Gamma $ is a  group isomorphism, therefore $ x^{\Gamma} $ is tame in $ K((\Gamma))^{>0} $ and $ \text{st}(r) = x^{v(r)} $ for all $ r \in K((\Gamma))^{>0} $ by Corollary \ref{cor.equiv_sect_val_grp_rcvf}.
\end{example}

Given a real closed valued field $ (K, v) $, one can therefore identify  the monomial groups of $  K$ with certain tame and cofinal divisible subgroups of $ K^{>0}$ by Corollary \ref{cor.equiv_sect_val_grp_rcvf}. Conversely, order-compatible valuations on  $ K $ are induced by certain tame and cofinal divisible subgroups of $ K^{>0} $:

\begin{lemma}\label{lem.tame_subgroup_generates_valution}
	Let $ K $ be a real closed field and $ G \subseteq  K^{>0}$ be divisible subgroup which is tame and cofinal in $ K^{>0} $. Let also $ K^{\times} := K \setminus \{ 0 \}  $ and $ G^{\emph{op}} $ be the $ o $-group obtained by reversing the order of $ G $. The following are equivalent:
\begin{enumerate}[\normalfont(i)]
	\item 	The map $ v_G : K^{\times} \lonto G^{\emph{op}}  $ given by  $ v_G(a) := \emph{st}(|a|)$ is an order-compatible valuation on $ K $. 	\item $ \emph{st}(|a+b|) \leq \max \{ \emph{st}(|a|), \emph{st}(|b|) \}    $ in $ G $ for all $ a, b \in K^{\times} $ with $ a \neq -b $.
\item $ \emph{st}(2) \leq 1 $.
\end{enumerate}
In particular, if any of the conditions \emph{(i)} - \emph{(iii)} hold, then $ G $ is a monomial group of the real closed valued field $ (K, v_G) $ and the corresponding convex valuation ring is $ V_{G}:= \{ 0 \}  \cup \{ a \in K^{\times}\mid \emph{st}(|a|) \leq 1\}   $.

\end{lemma}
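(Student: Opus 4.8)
The plan is to exploit that, by Proposition \ref{prop.retract_tame_subgrp} applied to the multiplicative extension $ G \subseteq K^{>0} $ of divisible $ o $-groups, the standard part map $ \text{st}: K^{>0} \lonto G $ is already a surjective, order-preserving group homomorphism. Consequently $ v_G(a) = \text{st}(|a|) $ automatically satisfies all the axioms of an order-compatible valuation except possibly the ultrametric inequality: multiplicativity $ v_G(ab) = \text{st}(|a|\,|b|) = \text{st}(|a|)\,\text{st}(|b|) = v_G(a)v_G(b) $ follows from $ \text{st} $ being a homomorphism, surjectivity onto $ G^{\text{op}} $ follows from surjectivity of $ \text{st} $, and order-compatibility (i.e.\ $ 0 < a < b \Rightarrow v_G(b) \leq v_G(a) $ in $ G^{\text{op}} $) is precisely the assertion that $ \text{st} $ is order-preserving, read in the reversed order of $ G^{\text{op}} $. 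The heart of the equivalence $ \text{(i)} \Leftrightarrow \text{(ii)} $ is then the observation that, after reversing the order, the ultrametric inequality $ v_G(a+b) \geq \min\{v_G(a), v_G(b)\} $ in $ G^{\text{op}} $ is literally the inequality $ \text{st}(|a+b|) \leq \max\{\text{st}(|a|), \text{st}(|b|)\} $ in $ G $ of item (ii). I would first spell out this dictionary between $ G $ and $ G^{\text{op}} $ carefully ($ \min $ and $ \max $ swap, $ \geq $ and $ \leq $ swap, and the neutral element is $ 1 $), since the bookkeeping around the order reversal is the easiest place to slip.

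For $ \text{(ii)} \Rightarrow \text{(iii)} $ I would simply substitute $ a = b = 1 $, which is legitimate as $ 1 \neq -1 $ in a field of characteristic $ 0 $, giving $ \text{st}(2) = \text{st}(|1+1|) \leq \max\{\text{st}(1), \text{st}(1)\} = \text{st}(1) = 1 $. The converse $ \text{(iii)} \Rightarrow \text{(ii)} $ is the only genuinely computational step, but it is short: using the triangle inequality $ |a+b| \leq |a| + |b| $ in the ordered field $ K $ together with $ |a| + |b| \leq 2\max\{|a|,|b|\} $, and applying the order-preserving homomorphism $ \text{st} $, I obtain
\[
\text{st}(|a+b|) \leq \text{st}\bigl(2\max\{|a|,|b|\}\bigr) = \text{st}(2)\cdot\max\{\text{st}(|a|),\text{st}(|b|)\} \leq \max\{\text{st}(|a|),\text{st}(|b|)\},
\]
where the middle equality uses multiplicativity of $ \text{st} $ and the fact that an order-preserving map commutes with the binary maximum, and the final inequality uses $ \text{st}(2) \leq 1 $ together with positivity of $ \max\{\text{st}(|a|),\text{st}(|b|)\} \in G \subseteq K^{>0} $.

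For the concluding \emph{in particular} clause, assuming (i)--(iii) hold, $ (K, v_G) $ is a real closed valued field because $ v_G $ is an order-compatible valuation and $ K $ is real closed. To see that $ G $ is a monomial group I would note that for $ g \in G $ one has $ v_G(g) = \text{st}(|g|) = \text{st}(g) = g $ by Remark \ref{rem.st_part_map.i}, so $ v_{G\upharpoonright G} $ is the identity of $ G $ viewed as a map $ G \lra G^{\text{op}} $, hence a group isomorphism onto the value group $ G^{\text{op}} $; this is exactly condition (i) of Corollary \ref{cor.equiv_sect_val_grp_rcvf}. Finally, the description of the valuation ring follows either directly, since $ V_{v_G} $ consists of $ 0 $ together with those $ a \in K^{\times} $ with $ v_G(a) \geq_{G^{\text{op}}} 1 $, i.e.\ $ \text{st}(|a|) \leq_G 1 $, or by invoking condition (iv) of Corollary \ref{cor.equiv_sect_val_grp_rcvf}, noting that the standard part map appearing there is the same $ \text{st} $ attached to the tame pair $ G \subseteq K^{>0} $. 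I expect the only real obstacle to be getting the order-reversal dictionary exactly right so that (ii) and the ultrametric inequality align; everything else is a direct application of Proposition \ref{prop.retract_tame_subgrp} and elementary ordered-field inequalities.
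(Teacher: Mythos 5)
Your proposal is correct and follows essentially the same route as the paper: both rest on Proposition \ref{prop.retract_tame_subgrp} to get that $\text{st}$ is a surjective order-preserving homomorphism (so only the ultrametric inequality is at issue), both derive it from $|a+b| \leq 2\max\{|a|,|b|\}$ together with $\text{st}(2) \leq 1$, and both settle the final clause via Corollary \ref{cor.equiv_sect_val_grp_rcvf}. The only differences are cosmetic: you prove the equivalences pairwise rather than in a cycle (i) $\Rightarrow$ (ii) $\Rightarrow$ (iii) $\Rightarrow$ (i), and you use commutation of $\text{st}$ with binary $\max$ where the paper uses the WLOG assumption $|a| \leq |b|$.
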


\begin{proof}
	\underline{(i) $ \Ra $  (ii).} Clear since $ v_G(a+b) \geq \min\{ v_G(a), v_G(b) \}  $ in $ G^{\text{op}} $ if and only if $ \text{st}(|a+b|) \leq \max \{ \text{st}(|a|), \text{st}(|b|) \}    $ in $ G $ for all $ a,b \in K^{\times} $ for all $ a \neq -b $.

	\underline{(ii) $ \Ra $  (iii).} Clear from the fact that $ \text{st}(1)= 1 $.

	\underline{(iii) $ \Ra $  (i).} 	By choice of $ K $ and $ G $, it follows from  Proposition \ref{prop.retract_tame_subgrp} that the standard part map $ \text{st} : K^{>0} \lonto G  $ is a surjective morphism of  divisible $ o $-groups, and thus $ v_G: K^{\times} \lra G^{\text{op}} $ is a surjective group homomorphism  such that $  a < b  $ in $ K^{>0} $ implies $ v_G(b) \leq v_G(a) $ in $ G^{\text{op}} $. It remains to show that for all $ a , b \in K^{\times} $ with $ a \neq -b $, $ v_G(a+b) \geq \min\{ v_G(a), v_G(b) \}  $ in $ G^{\text{op}} $, i.e., $ \text{st}(|a+b|) \leq \max \{ \text{st}(|a|), \text{st}(|b|) \}    $ in $ G $. Pick $ a, b \in K^{\times} $ with $ a \neq -b $ and assume without loss of generality that $ |a | \leq |b| $, so that $ \max \{ \text{st}(|a|), \text{st}(|b|) \}  = \text{st}(|b|) $; then $ |a+b| \leq |a|+ |b| \leq 2|b| $, therefore $ \text{st}(|a+b|)\leq \text{st}(2)\text{st}(|b|) \leq \text{st}(|b|)$, as required.

	The last statement follows by the equivalence of (i) $ \Leftrightarrow $ (iv) in Corollary \ref{cor.equiv_sect_val_grp_rcvf}.
\end{proof}

Let $ \mathscr{L}^{\text{poring}}:= \{ +, -, \cdot,  0, 1, \leq \}  $ be the language of partially ordered rings and $ \mathscr{L}_P:= \mathscr{L}^{\text{poring}} \cup \{ P \} $ where $ P $ is a unary relation symbol. Define $ T_P $ to be the $ \mathscr{L}_P $-extension of the theory of real closed fields stating that $ P $ is a divisible, tame and cofinal subgroup of  the positive multiplicative units of the field  which satisfies any of the items (i) - (iii) in Lemma \ref{lem.tame_subgroup_generates_valution}. Then  $ T_P $ is a one-sorted axiomatization of the class of real closed valued fields in which the value group is definable (as well as the corresponding convex valuation ring). One may also enrich the language $ \mathscr{L}_P $ with a new unary predicate $ \mathscr{L}_{P,Q}:= \mathscr{L}_{P} \cup \{ Q \}  $, and consider the $ \mathscr{L}_{P, Q} $-extension of $ T_{P} $ stating that $ Q $ is a tame real closed subfield of the underlying field $ K $ such that the convex hull of $ Q $ in $ K $ is equal to the convex valuation ring $ V $ induced by $ P $ in the sense of Lemma \ref{lem.tame_subgroup_generates_valution}. If $ (K; G, L) \models T_{P, Q} $, then by Remark 2.11 and Theorem 2.12 in \cite{vdD/Lewenberg.T-convexity_and_tame_extensions} it follows that $ L $ is isomorphic to the residue field of $ (K; G) \models T_{P} $, therefore $ T_{P, Q} $ is a one-sorted axiomatization of the class of real closed valued fields in which both the value group and the residue field are definable. 

\printbibliography

\typeout{get arXiv to do 4 passes: Label(s) may have changed. Rerun}

\end{document}